\journal{}
\begin{document}
\begin{frontmatter}



\title{GK-Dimension of some Connected Hopf Algebras}


\author{Daniel Yee}

\address{Bradley University, Peoria, IL 61606}

\begin{abstract}While it was identified that the growth of any connected Hopf algebras is either a positive integer or infinite (see \cite{zhu}), we have yet to determine the GK-dimension of a given connected Hopf algebra. We use the notion of anti-cocommutative elements introduced in \cite{wzz3} to analyze the structure of connected Hopf algebras generated by anti-cocommutative elements and compute the Gelfand-Kirillov dimension of said algebras. Additionally, we apply these results to compare global dimension of connected Hopf algebras and the dimension of their corresponding Lie algebras of primitive elements. \end{abstract}

\begin{keyword}
Hopf Algebras, Gelfand-Kirillov Dimension

\MSC[2010] 16T05
\end{keyword}

\end{frontmatter}

\section{Introduction}
\par Connected Hopf algebras are generalizations of universal enveloping algebras $U(\fk{g})$ with respect to the Hopf structure, so it is natural to ask if any of the ring-theoretic properties of enveloping algebras hold for connected Hopf algebras. Throughout the article, we focus on the Gelfand-Kirillov dimension, denoted GK.dim, and the global dimension, denoted gl.dim of some connected Hopf algebras.
\par We assume that $k$ is an algebraically closed field $k$ of characteristic zero. All vector spaces, algebras, tensor products, affine-ness and linear maps are over $k$. We let $\tau:V\otimes W\ra W\otimes V$ represent the twist map $\tau(v\otimes w)=w\otimes v$, and let $\tau\circ\delta$ be the composition between two linear maps $\tau$ and $\delta$.
\par Given a Hopf algebra $H$, we denote $\Delta:H\ra H\otimes H$ as comultiplication on $H$ with the Sweedler notation\begin{align*}\Delta(h)=\sum_hh_1\otimes h_2.\end{align*}Additionally, let $S:H\ra H$ be the antipode of $H$, $P(H)$ be the Lie subalgebra of primitive elements of $H$, i.e.\begin{align*}P(H)=\{x\in H:\Delta(x)=x\otimes1+1\otimes x\},\end{align*}let $\{H_n:n\geq0\}$ be the coradical filtration on $H$, and $\fx{gr}H$ be the associated graded algebra with respect to the coradical filtration. Because $P(H)$ is a Lie algebra, there exists a Hopf algebra isomorphism between the enveloping algebra $U(P(H))$ and the Hopf subalgebra of $H$ generated by $P(H)$, which we will denote this Hopf subalgebra as $U_H$.
\par Recall that a Hopf algebra $H$ is \textit{connected} if $H_0=F$, and a connected Hopf algebra is \textit{locally finite} if every coradical filter is finite dimensional, or equivalently the Lie algebra of primitive elements is finite dimensional. Furthermore, given a connected Hopf algebra $H$, for any $n\geq1$ and for every $h\in H_n$, we have $\Delta(h)=h\otimes1+1\otimes h+w$, where $w\in H_{n-1}\otimes H_{n-1}$ (see \cite[Lemma 5.3.2]{mon}), and let $\ol{h}$ be the element in the subspace $H_n/H_{n-1}$.
\par Given a connected Hopf algebra $H$, we take into consideration a particular vector space\begin{align*}P_2(H):=\{c\in H:\tau\circ\delta(c)=-\delta(c),\ \x{and}\ \delta(c)\in P(H)\otimes P(H)\},\end{align*}where the linear map $\delta:H\ra H\otimes H$ is defined by\begin{align*}\delta(h)=\Delta(h)-(h\otimes1+1\otimes h),\end{align*}for all $h\in H$. We call elements belonging to $P_2(H)$ \textit{anti-cocommutative}; it is clear that $\delta(P(H))=0$, so primitive elements are anti-cocommutative. The reader can find more about anti-cocommutativity and $P_2(H)$ in \cite{wzz5}. Finally, we say that $H$ is \textit{primitively thick} if $\x{GK.dim}(H)=\dim P(H)+1<\infty$, or equivalently, if $H$ is generated by $P_2(H)$ as a Hopf algebra, and $\dim P_2(H)=\dim P(H)+1$ is finite (see \cite[Lemma 2.6]{wzz5} and \cite[Theorem 6.2.12]{gil}).
\par In section 1, we determine the GK-dimension of some connected Hopf algebras $H$. In particular, our focus is on Hopf algebras generated by $P_2(H)$, so we will denote $\cl{A}(\fk{g})$ as the class of locally finite connected Hopf algebras $H$ generated by $P_2(H)$ given a finite dimensional Lie algebra $\fk{g}=P(H)$ and $H\neq U_H\cong U(\fk{g})$. It immediately follows that every algebra in $\cl{A}(\fk{g})$ is affine. To calculate GK-dimension of these algebras, we need the normality condition. Recall that given a Hopf algebra $H$, a Hopf subalgebra $A$ of $H$ is \textit{normal} if both\begin{align*}\x{ad}_l[h](a):=\sum_hh_1aS(h_2)\in A,\ \x{and}\ \x{ad}_r[h](a):=\sum_hS(h_1)ah_2,\end{align*}for every $h\in H$ and every $a\in A$.

\begin{thm}\label{00}If $H\in\cl{A}(\fk{g})$, and if $U_H$ is a normal Hopf subalgebra of $H$, then $\x{GK.dim}(H)=\dim P_2(H)$.\end{thm}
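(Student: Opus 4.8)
The plan is to realize $H$ as a Hopf extension of an enveloping algebra by $U_H$ and then to add Gelfand--Kirillov dimensions along it. Since $U_H$ is a normal Hopf subalgebra of $H$, the left ideal $HU_H^+$ coincides with the right ideal $U_H^+H$ and is a Hopf ideal, so $\ol{H}:=H/HU_H^+$ is again a connected Hopf algebra; writing $\pi:H\ra\ol{H}$ for the quotient map we obtain an exact sequence of Hopf algebras
\begin{align*}
k\ra U_H\ra H\ra\ol{H}\ra k,
\end{align*}
in which $H$ is faithfully flat over $U_H$ (connected Hopf algebras are faithfully flat over their Hopf subalgebras). I will also invoke the standard computation $\x{GK.dim}(U_H)=\x{GK.dim}(U(\fk{g}))=\dim\fk{g}=\dim P(H)$, valid because $\fk{g}$ is finite dimensional.

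The first step is to identify $\ol{H}$. As $H$ is generated as an algebra by $P_2(H)$ and, for $c\in P_2(H)$, we have $\delta(c)\in P(H)\otimes P(H)\subseteq U_H^+\otimes U_H^+$, each $\pi(c)$ is primitive in $\ol{H}$, and these images generate $\ol{H}$. Hence $\ol{H}$ is a cocommutative connected Hopf algebra over an algebraically closed field of characteristic zero, so $\ol{H}\cong U(\fk{h})$ with $\fk{h}:=P(\ol{H})$; moreover $\fk{h}$ is finite dimensional, since $\ol{H}$, being a quotient of an affine algebra, is affine. Therefore $\x{GK.dim}(\ol{H})=\dim\fk{h}$.

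The heart of the matter is the identity $\dim\fk{h}=\dim P_2(H)-\dim P(H)$, i.e. that $\pi$ restricts to a linear surjection $P_2(H)\ra\fk{h}$ with kernel exactly $P(H)$. The inclusion $P(H)\subseteq\ker(\pi|_{P_2(H)})$ is clear; the reverse inclusion amounts to $P_2(H)\cap HU_H^+=P(H)$, and surjectivity amounts to lifting each primitive of $\ol{H}$ to an anti-cocommutative element of $H$ --- that is, passing to $\ol{H}$ neither creates primitives beyond the images of $P_2(H)$ nor collapses anti-cocommutative elements outside $P(H)$. This is exactly where the normality of $U_H$ enters: normality forces $\x{ad}_l[h](a)$ and $\x{ad}_r[h](a)$ to lie in $U_H$ for all $h\in H$ and $a\in U_H$, which pins down how $P_2(H)$ sits relative to $U_H$ inside the coradical filtration; together with the structural results on $P_2$ and on the coradical filtration from \cite{wzz5} this yields the isomorphism $P_2(H)/P(H)\cong\fk{h}$. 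I expect this simultaneous control of $P_2$ and of the coradical filtration across $\pi$ to be the main obstacle; when $\dim P_2(H)=\dim P(H)+1$ one is simply back in the primitively thick situation.

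It then remains to add GK-dimensions along the exact sequence. Because $H$ is faithfully flat over the normal Hopf subalgebra $U_H$ and $\ol{H}$ is connected, $H$ is, as a left $U_H$-module, of the form $U_H\otimes\ol{H}$ in a way compatible (up to the evident degree shift) with the coradical filtrations, so the relevant Hilbert series multiply and $\x{GK.dim}(H)=\x{GK.dim}(U_H)+\x{GK.dim}(\ol{H})$; this additivity for connected Hopf algebras is also available from \cite{zhu}. Combining the three steps,
\begin{align*}
\x{GK.dim}(H)=\dim P(H)+\dim\fk{h}=\dim P(H)+(\dim P_2(H)-\dim P(H))=\dim P_2(H),
\end{align*}
which is the desired equality.
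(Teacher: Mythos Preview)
Your outline via the Hopf quotient $\ol{H}=H/HU_H^+$ is a genuinely different route from the paper's: the paper never forms $\ol{H}$ but instead shows directly that $H$ is an almost centralizing extension of $U_H$ (Lemma~\ref{10}, Corollary~\ref{11}) by decomposing $\delta=\delta_{cc}+\delta_{ac}$ and proving, via Lemmas~\ref{12}, \ref{14}, \ref{15}, that $\delta([t_i,t_j])\in\delta(P_2(H))+\delta(U_H)$, whence $[t_i,t_j]\in P_2(H)+U_H$. Your Step~3, however, is exactly this computation in disguise and you have not carried it out. For $\pi|_{P_2(H)}$ to surject onto $\fk{h}=P(\ol{H})$ you need $\pi(P_2(H))$ to be bracket-closed, since a priori it only \emph{generates} $\fk{h}$ as a Lie algebra; that is, you need $[t_i,t_j]\in P_2(H)+HU_H^+$ for non-primitive $t_i,t_j\in P_2(H)$. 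Invoking normality and unnamed ``structural results from \cite{wzz5}'' does not establish this---the paper's $\delta_{cc}/\delta_{ac}$ analysis is where the actual work happens. Relatedly, your claim that $\fk{h}$ is finite dimensional because $\ol{H}$ is affine is false: the free associative algebra $k\langle x,y\rangle$ is affine and equals $U(L)$ for $L$ the free Lie algebra on two letters, which is infinite dimensional. Finite-dimensionality of $\fk{h}$ again follows only once $\pi(P_2(H))$ is known to be bracket-closed.

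Step~4 is also unjustified as written. GK-dimension is not in general additive along faithfully flat exact sequences of Hopf algebras, no such statement appears in \cite{zhu}, and the asserted compatibility of the module splitting $H\cong U_H\otimes\ol{H}$ with the coradical filtrations requires proof. The paper obtains additivity differently: once $[t_i,t_j]\in\sum_m t_mU_H+U_H$ and $[t_i,U_H]\subset U_H$ are in hand, \cite[Proposition~6.4]{zhu} gives $\fx{gr}H\cong(\fx{gr}U_H)[\ol{t}_1,\dots,\ol{t}_n]$, and then \cite[Proposition~8.6.7]{mcr} yields $\x{GK.dim}(H)=\x{GK.dim}(U_H)+n=\dim P_2(H)$. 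So both nontrivial steps of your outline reduce to the same bracket computation the paper performs, and neither can be replaced by a citation.
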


\par Theorem \ref{00} is analogous to the fact that $\x{GK.dim}(U(\fk{g}))=\dim\fk{g}$, and $\fk{g}$ is the generating subspace of $U(\fk{g})$. With finite GK-dimension, algebras in $\cl{A}(\fk{g})$ have nice ring-theoretic properties thanks to \cite[Corollary 6.10]{zhu}.

\begin{cor}\label{01}If $H\in\cl{A}(\fk{g})$, and $U_H$ is a normal Hopf subalgebra of $H$, then $H$ is a Noetherian, Cohen-Macaulay, Auslander-regular domain with $\x{gl.dim}(H)=\x{GK.dim}(H)$.\end{cor}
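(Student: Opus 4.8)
The plan is to quote the cited result from \cite{zhu} (Corollary 6.10) in a suitable form and verify its hypotheses for $H\in\cl{A}(\fk{g})$ with $U_H$ normal. That result should assert: a connected (hence pointed, hence — in char zero — "nice" enough) Hopf algebra of finite GK-dimension is a Noetherian Auslander-regular, Cohen-Macaulay domain whose global dimension equals its GK-dimension. So the only real content here is to confirm that $H$ has finite GK-dimension, which is exactly Theorem \ref{00}: under the stated hypotheses $\x{GK.dim}(H)=\dim P_2(H)<\infty$ because $H$ is locally finite (so $\dim P(H)<\infty$, and $\dim P_2(H)=\dim P(H)+1<\infty$ by the primitively-thick type bound, or more directly because $H$ is generated by the finite-dimensional space $P_2(H)$).

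First I would recall that every $H\in\cl{A}(\fk{g})$ is affine, generated by the finite-dimensional space $P_2(H)$, and connected. Next I would invoke Theorem \ref{00} to get $d:=\x{GK.dim}(H)=\dim P_2(H)\in\mathbb{Z}_{\ge 0}$, in particular finite. Then I would cite \cite[Corollary 6.10]{zhu}: for a connected Hopf algebra $H$ over a field of characteristic zero with $\x{GK.dim}(H)<\infty$, $H$ is Noetherian, a domain, Auslander-regular, Cohen-Macaulay, and $\x{gl.dim}(H)=\x{GK.dim}(H)$. Combining these two statements gives all the asserted properties and the equality $\x{gl.dim}(H)=\x{GK.dim}(H)=\dim P_2(H)$.

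The main obstacle — to the extent there is one — is purely bibliographic: making sure the hypotheses of \cite[Corollary 6.10]{zhu} match our setting (connectedness and characteristic zero are standing assumptions in the paper, and finite GK-dimension is supplied by Theorem \ref{00}), and that the conclusion is stated there for the global (rather than merely injective) dimension. If \cite{zhu} phrases the regularity conclusion in terms of $H$ being "Artin--Schelter regular" or "skew Calabi--Yau," I would note that for connected Hopf algebras of finite GK-dimension this is equivalent to the Auslander-regular, Cohen--Macaulay formulation with $\x{gl.dim}=\x{GK.dim}$, so the statement of the corollary is unaffected. No new computation beyond Theorem \ref{00} is required.
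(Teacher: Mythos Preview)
Your proposal is correct and matches the paper's own treatment exactly: the paper does not give a separate proof of this corollary but simply remarks that, once Theorem \ref{00} supplies finite GK-dimension, all the asserted properties follow from \cite[Corollary 6.10]{zhu}. Your verification of the hypotheses (connectedness, characteristic zero, finite GK-dimension via Theorem \ref{00}) is precisely what is needed.
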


\par In section 2, we apply the tools in proving Theorem \ref{00} into the following result on global dimension of connected Hopf algebras. Note that $\x{gl.dim}(H)=\x{l.gl.dim}(H)=\x{r.gl.dim}(H)$ for any Hopf algebra $H$ due to \cite[Proposition A.1]{wyz}.

\begin{thm}\label{02}Suppose $H $is any locally finite connected Hopf algebra with\begin{align*}\dim P(H)=\x{gl.dim}(H)<\infty.\end{align*}If one of the following conditions holds\begin{enumerate}
\item $P(H)$ is (completely) solvable, or
\item $U_H$ is a normal Hopf subalgebra of $H$,\end{enumerate}then $H=U_H$.\end{thm}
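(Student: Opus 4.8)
The plan is to show that the hypotheses pin $\x{GK.dim}(H)$ down to $\dim P(H)$, and then to collapse $H$ onto $U_H$. Write $\fk{g}=P(H)$ and $n=\dim\fk{g}=\x{gl.dim}(H)$, and recall that $U_H\cong U(\fk{g})$ is a Hopf subalgebra of $H$ with $\x{GK.dim}(U_H)=\x{gl.dim}(U_H)=n$. Since a subalgebra cannot have strictly larger GK-dimension, $\x{GK.dim}(H)\geq n$ is automatic, so the real content is the reverse bound together with rigidity in the equality case. The first --- and, I expect, hardest --- step is to prove $\x{GK.dim}(H)<\infty$; granting this, \cite[Corollary 6.10]{zhu} gives $\x{gl.dim}(H)=\x{GK.dim}(H)$, so the hypothesis forces $\x{GK.dim}(H)=n=\x{GK.dim}(U_H)$.

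To get $\x{GK.dim}(H)<\infty$, I would pass to the coradically graded Hopf algebra $\fx{gr}H$, which is connected, graded, locally finite, and of the same GK-dimension as $H$. Finiteness of $\x{gl.dim}(H)$ yields a finite minimal projective resolution of the trivial module, with terms assembled from the spaces $\x{Ext}^i_H(k,k)$; once these are known to be finite dimensional --- equivalently, once $H$ is known to be a finitely generated algebra, where local finiteness together with the inclusion $P(H)\hookrightarrow H^+/(H^+)^2$ is the natural input --- the Euler characteristic of the resolution turns into a Hilbert-series identity $h_{\fx{gr}H}(t)\,q(t)=1$ with $q\in\mathbb{Z}[t]$, so $h_{\fx{gr}H}$ is rational with its only pole at $t=1$ and $\x{GK.dim}(H)=\x{GK.dim}(\fx{gr}H)$ is the (finite) order of that pole. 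Making the finite-generation step airtight, and confirming that characteristic zero and connectedness are genuinely used, is the delicate part of the argument.

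With $\x{GK.dim}(H)=n$ in hand, condition (2) finishes quickly. Normality of $U_H$ makes $HU_H^+$ a Hopf ideal, so $\ol{H}:=H/HU_H^+$ is again a connected Hopf algebra, and $H$ is faithfully flat over $U_H$; this is exactly the configuration from the proof of Theorem \ref{00}, from which one reads off the additivity $\x{GK.dim}(H)=\x{GK.dim}(U_H)+\x{GK.dim}(\ol{H})$. Hence $\x{GK.dim}(\ol{H})=0$, so $\ol{H}$ is finite dimensional, and a finite dimensional connected Hopf algebra in characteristic zero is $k$ (a nonzero primitive would generate an infinite dimensional polynomial subalgebra); thus $\ol{H}=k$, i.e. $HU_H^+=H^+$, which forces $H=U_H$. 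Equivalently, one can bypass GK-dimension here via the Lyndon--Hochschild--Serre spectral sequence $\x{Ext}^p_{\ol{H}}(k,\x{Ext}^q_{U_H}(k,k))\Rightarrow\x{Ext}^{p+q}_H(k,k)$: if $\ol{H}\neq k$ the top corner $E_2^{\x{gl.dim}(\ol{H}),\,n}$ admits no differentials in or out, so $\x{Ext}^{>n}_H(k,k)\neq0$, contradicting $\x{gl.dim}(H)=n$.

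For condition (1), normality of $U_H$ is not given, so I would induct on $n=\dim\fk{g}$, using that a finite dimensional solvable Lie algebra over an algebraically closed field of characteristic zero carries a flag of ideals $0=\fk{g}_0\subset\fk{g}_1\subset\cdots\subset\fk{g}_n=\fk{g}$. The base case $n=0$ is immediate: $P(H)=0$ forces $H_1=H_0=k$, hence $H=k=U(0)$. For the inductive step the plan is to build, inside the (now finite-GK-dimensional) connected Hopf algebra $H$, a chain of Hopf subalgebras shadowing the flag, each normal in the next with one-dimensional successive quotients isomorphic to $k[x]$, and to compare it against the iterated Ore presentation of $U(\fk{g})$ coming from the same flag; matching GK-dimensions rung by rung then forces $H=U_H$. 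I expect this case to demand more bookkeeping than (2), since here one must manufacture the chain of normal Hopf subalgebras out of solvability rather than be handed normality outright --- which is precisely where hypothesis (1) is consumed.
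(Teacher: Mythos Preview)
Your entire strategy hinges on deducing $\x{GK.dim}(H)<\infty$ from $\x{gl.dim}(H)<\infty$, but this implication is precisely the open question the paper poses immediately after Theorem~\ref{18}: only the direction ``finite GK $\Rightarrow$ finite global, and then they agree'' is available from \cite[Corollary~6.10]{zhu}. Your Hilbert-series sketch needs $H$ (or $\fx{gr}H$) to be a finitely generated algebra, and the justification you offer --- local finiteness plus $P(H)\hookrightarrow H^+/(H^+)^2$ --- does not supply this: the subalgebra generated by $P(H)$ is exactly $U_H$, and whether $H=U_H$ is the very thing at stake. Even granting finite generation, turning a projective resolution over $H$ into a Hilbert-series identity for $\fx{gr}H$ requires $\fx{gr}H$ to have finite global dimension as well, another unproved step. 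Your Lyndon--Hochschild--Serre alternative for case~(2) is closer in spirit to a direct lower bound on $\x{gl.dim}(H)$, but as written it presupposes $\x{gl.dim}(\ol{H})<\infty$ in order to locate a surviving ``top corner,'' and nothing in the hypotheses gives that. Finally, your inductive flag scheme for case~(1) never explains how ideals of $\fk{g}$ become normal Hopf subalgebras of $H$ itself; that passage is exactly the difficulty.

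The paper sidesteps GK-dimension entirely. Assuming $H\neq U_H$, one has $P_2(H)\neq P(H)$ by \cite[Lemma~2.6]{wzz5}, and the goal is simply to exhibit a Hopf subalgebra $A\supsetneq U_H$ which, as an algebra, is an enveloping algebra of a Lie algebra of dimension $n+1$. Then $A$ is Noetherian with $\x{gl.dim}(A)=n+1$, and Lemma~\ref{20} (faithful flatness of $H$ over $A$) forces $\x{gl.dim}(H)\geq n+1$, a contradiction. Under condition~(2), Lemma~\ref{15} gives $[t,\fk{g}]\subset\fk{g}$ for any non-primitive $t\in P_2(H)$, so $A=\langle\fk{g}\oplus kt\rangle$ already works. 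Under condition~(1) one cannot choose $t$ arbitrarily; instead solvability is spent, via Lie's theorem applied to the $\fk{g}$-module $P_2(H)/\fk{g}$, to produce an eigenvector $v$ (Lemma~\ref{24}), and then $A=\langle\fk{g}\oplus kv\rangle$ is bracket-closed for the same reason. This single-subalgebra argument replaces both your GK-finiteness step and your flag induction.
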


\par The motivation behind Theorem \ref{02} is to mimic \cite[Lemma 7.2]{zhu} but we replace GK-dimension for global dimension.

\section{Gelfand-Kirillov Dimension}
\par To compute GK-dimension, we need to connect normal Hopf algebras with a ring theoretic notion: almost centralizing extensions.
\par Given algebras $R\subset S$, where $S$ is generated by $\{x_1,...,x_d\}$ over $R$, we say that $S$ is an \textit{almost centralizing extension} of $R$ if the following hold:\begin{enumerate}
\item $[r,x_i]:=rx_i-x_ir\in R$ for all $r\in R$,
\item $[x_i,x_j]:=x_ix_j-x_jx_i\in\sum_{m=1}^dx_mR+R$,
\end{enumerate}for all $i,j\leq d$.

\begin{lmm}\label{10}Let $A\subset H$ be connected Hopf algebras, and let $\{h_1,...,h_d\}$ generate $H$ over $A$ such that $\delta(h_i)\in A\otimes A$ for all $i\leq d$. Then $H$ is an almost centralizing extension of $A$ if and only if both conditions are satisfied:\begin{enumerate}
\item $A$ is a normal Hopf subalgebra of $H$, and
\item $\delta([h_i,h_j])\in\sum_{m=1}^d\delta(h_mA)+\delta(A)$ for all $i,j\leq d$.
\end{enumerate}\end{lmm}
\begin{proof}First, assume that $H$ is an almost centralizing extension. It is clear that condition 2 of the definition implies condition 2, since $\delta$ is a linear map. To show normality, consider any $a\in A$. Since the adjoint map $\x{ad}_l[h]$ is linear and $\x{ad}_l[bc]=\x{ad}_l[c]\circ\x{ad}_l[b]$, for all $b,c\in H$, then without loss of generality let $h\in\{h_1,...,h_d\}$ and $\Delta(h)=h\otimes1+1\otimes h+\sum_hh_1\otimes h_2$. It follows that $S(h)=-h-\sum_hh_1S(h_2)$, and \begin{align*}\x{ad}_l[h](a)&=ha+aS(h)+\sum_hh_1aS(h_2)\\
&=ha-ah-\sum_hah_1S(h_2)+\sum_hh_1aS(h_2)\\
&=[h,a]+\sum_h[h_1,a]S(h_2).\end{align*}Our assumption $\delta(h_i)\in A\otimes A$ and $A$ is a Hopf subalgebra implies that $\sum_h[h_1,a]S(h_2)\in A$. Furthermore, almost centralizing extension implies that $[h,a]\in A$, hence $\x{ad}_l[h](A)\subset A$. Similarly, $\x{ad}_r[h](A)\subset A$, therefore $A$ is a normal Hopf subalgebra of $H$.
\par Conversely, assume that both 1 and 2 hold. Normality combined with the adjoint computation from the previous paragraph shows that $[h,a]=\x{ad}_l[h](a)-\sum_h[h_1,a]S(h_2)\in A$, for all $a\in A$ and all $h\in\{h_1,...,h_d\}$.
\par With 2, write $\delta([h_i,h_j])=\sum_{m=1}\delta(h_ma_m)+\delta(a_0)$, where $a_0,...,a_m\in A$. Since we have\begin{align*}\Delta([h_i,h_j])&=[h_i,h_j]\otimes1+1\otimes[h_i,h_j]+\delta([h_i,h_j]),\\
\Delta(h_ma_m)&=h_ma_m\otimes1+1\otimes h_ma_m+\delta(h_ma_m),\\
\Delta(a_0)&=a_0\otimes1+1\otimes a_0+\delta(a_0),\end{align*}for all $m\leq d$, it follows that\begin{align*}\Delta\left([h_i,h_j]-\sum_{m=1}^dh_ma_m-a_0\right)&=\left([h_i,h_j]-\sum_{m=1}^dh_ma_m-a_0\right)\otimes1\\
&\qquad+1\otimes\left([h_i,h_j]-\sum_{m=1}^dh_ma_m-a_0\right),\end{align*}whence $[h_i,h_j]-\sum_{m=1}^dh_ma_m-a_0\in P(H)$. By our assumption, the generating set $\{h_1,...,h_d\}$ contains the basis elements of $P(H)$ that are not in $P(A)$. Thus, we may assume that $[h_i,h_j]-\sum_{m=1}^dh_ma_m-a_0=0$. Since $i,j\leq d$ are arbitrary, then $[h_i,h_j]\in\sum_{m=1}h_mA+A$, hence $H$ is an almost centralizing extension of $A$.\end{proof}

\par Though the next corollary is not exactly Theorem \ref{00}, there are cases where it may be used to compute the GK-dimension of connected Hopf algebras.

\begin{cor}\label{11}Suppose $A\subset H$ are affine locally finite connected Hopf algebras such that $\{h_1,...,h_d\}\subset H$ generate $H$ over $A$. If the following hold:\begin{enumerate}
\item $\delta(h_i)\in A\otimes A$ for all $i\leq d$,
\item $\delta([h_i,h_j])\in\sum_{m=1}^d\delta(h_mA)+\delta(A)$ for all $i,j\leq d$,
\item $A$ is a normal Hopf subalgebra of $H$ with finite GK-dimension,\end{enumerate}then $\x{GK.dim}(H)=\x{GK.dim}(A)+d<\infty$.\end{cor}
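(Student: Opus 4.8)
The plan is to reduce everything to Lemma \ref{10}. First I would check that its hypotheses are met: $A\subset H$ are connected Hopf algebras, $\{h_1,\dots,h_d\}$ generates $H$ over $A$, condition (1) is exactly the requirement $\delta(h_i)\in A\otimes A$, condition (3) supplies normality of $A$, and condition (2) is the remaining alternative in the biconditional of Lemma \ref{10}. Hence Lemma \ref{10} applies and $H$ is an almost centralizing extension of $A$; concretely $[a,h_i]\in A$ for all $a\in A$ and $[h_i,h_j]\in\sum_{m=1}^d h_mA+A$ for all $i,j$.

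For the upper bound I would use these commutation relations to reorder words. Since $A$ is affine, fix a finite-dimensional generating subspace $V\ni 1$ of $A$ and set $W=V+\sum_{i=1}^d k h_i$; because only finitely many pairs $h_i,h_j$ occur and the elements of $A$ produced when swapping them lie in a fixed power $V^c$, every word of length $n$ in $W$ rewrites as an $A$-combination of ordered monomials $h^\alpha$ with $|\alpha|\le n$ and coefficients in $V^{cn}$, so $\dim W^n\le\binom{n+d}{d}\dim V^{cn}$. As $\x{GK.dim}(A)<\infty$ is computed from the growth of $\dim V^m$, this gives $\x{GK.dim}(H)\le\x{GK.dim}(A)+d<\infty$. (This is also the content of the usual Gelfand--Kirillov estimates for almost normalizing extensions in the literature.)

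The substance is the reverse inequality: one must rule out any ``collapse'', i.e. show that the $d$ new generators each contribute a full unit. This is where the hypothesis $\delta(h_i)\in A\otimes A$ — strictly stronger than the automatic $\delta(h_i)\in H\otimes H$ — is essential, since it makes the filtration $\mathcal F$ placing $A$ in degree $0$ and each $h_i$ in degree $1$ a filtration \emph{of Hopf algebras}: $\Delta(h_i)=h_i\otimes1+1\otimes h_i+\delta(h_i)$ with $h_i\otimes1,\,1\otimes h_i$ in $\mathcal F$-degree $1$ and $\delta(h_i)\in A\otimes A$ in $\mathcal F$-degree $0$. Thus the associated graded algebra $\mathrm{gr}\,H$ (for this new filtration, not the coradical one) is a connected graded Hopf algebra in which $A=(\mathrm{gr}\,H)_0$ is a normal Hopf subalgebra, each $\bar h_i\in(\mathrm{gr}\,H)_1$ is primitive and central, and $\mathrm{gr}\,H$ is spanned over $A$ by the $h^\alpha$. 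I would then prove the PBW statement that $\mathrm{gr}\,H$ — equivalently $H$ — is \emph{free} over $A$ on the $h^\alpha$: conceptually, the quotient Hopf algebra $B:=\mathrm{gr}\,H/(\mathrm{gr}\,H)A^{+}$ ($A^{+}$ the augmentation ideal) is a connected graded commutative Hopf algebra generated in degree $1$ by the $d$ classes of the $\bar h_i$, hence a polynomial ring $k[X_1,\dots,X_d]$ by the characteristic-zero structure theory, so $\x{GK.dim}(B)=d$, and faithful flatness of $H$ over the normal Hopf subalgebra $A$ transports this to a free decomposition $H\cong\bigoplus_{\alpha\in\mathbb{Z}_{\geq0}^{d}}A\,h^\alpha$; a more hands-on route is a direct induction on $|\alpha|$ using $\Delta(h^\alpha)=\sum_{\beta\le\alpha}\binom{\alpha}{\beta}h^\beta\otimes h^{\alpha-\beta}+(\text{terms of lower }\mathcal F\text{-degree})$ together with linear independence of the $\bar h_i$ modulo $A$. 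With freeness in hand, $\dim W^{2n}\ge(\dim V^n)\binom{n+d}{d}$, so $\x{GK.dim}(H)\ge\x{GK.dim}(A)+d$, and combining with the upper bound yields $\x{GK.dim}(H)=\x{GK.dim}(A)+d<\infty$.

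The step I expect to be the main obstacle is exactly this PBW/freeness claim. An abstract almost centralizing extension by $d$ elements only forces $\x{GK.dim}(H)\le\x{GK.dim}(A)+d$, and equality genuinely requires that the $h_i$ be ``new'' primitive-type generators over $A$ — which is precisely what the conditions $\delta(h_i)\in A\otimes A$ and connectedness encode; the reordering, growth estimates, and descent to $B$ are routine once that is settled.
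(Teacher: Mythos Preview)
Your argument is correct and reaches the same conclusion, but the route differs from the paper's. Both proofs open identically by invoking Lemma \ref{10} to get that $H$ is an almost centralizing extension of $A$. After that the paper works exclusively with the \emph{coradical} filtration: it cites \cite[Proposition 6.4]{zhu} to identify $\mathrm{gr}\,H=\mathrm{gr}\,A[\bar h_1,\dots,\bar h_d]$ as a commutative polynomial ring over $\mathrm{gr}\,A$, applies \cite[Proposition 8.6.7]{mcr} to get $\mathrm{GK.dim}(\mathrm{gr}\,H)=\mathrm{GK.dim}(\mathrm{gr}\,A)+d$, and then invokes \cite[Theorem 6.9]{zhu} to pass back to $\mathrm{GK.dim}(H)=\mathrm{GK.dim}(A)+d$. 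So the paper never separates the upper and lower bounds, and the entire PBW/no-collapse issue you flag is absorbed into the citation of Zhuang's Proposition 6.4. Your approach instead builds a \emph{new} filtration $\mathcal F$ by $h_i$-degree, checks it is a Hopf filtration precisely because $\delta(h_i)\in A\otimes A$, and extracts the PBW basis from the quotient Hopf algebra $B=\mathrm{gr}_{\mathcal F}H/A^{+}$ via Milnor--Moore/faithful flatness. Your version is longer but more self-contained, and it makes visible exactly where condition (1) enters the lower bound; the paper's version is a three-line reduction to the literature. One small caution: your claim that $B\cong k[X_1,\dots,X_d]$ with \emph{exactly} $d$ variables presumes the $h_i$ are linearly independent modulo $A$, which is not literally stated in the hypotheses---but the same tacit assumption underlies the paper's identification $\mathrm{gr}\,H=\mathrm{gr}\,A[\bar h_1,\dots,\bar h_d]$, so you are no worse off.
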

\begin{proof}Lemma \ref{10} states that $H$ must be an almost centralizing extension of $A$. By \cite[Proposition 6.4]{zhu} the associated graded algebra with respect to the coradical filtration is $\fx{gr}H=\fx{gr}A[\ol{h}_1,...,\ol{h}_d]$, the commutative polyonomial ring over $\fx{gr}A$. Thus, by \cite[Proposition 8.6.7]{mcr} we have that $\x{GK.dim}(\fx{gr}H)=\x{GK.dim}(\fx{gr}A)+d$. Assuming $A$ has finite GK-dimension implies that $\fx{gr}A$ is affine and $\fx{gr}A$ has finite GK-dimension by \cite[Theorem 6.9]{zhu}, and hence $\fx{gr}H$ has finite GK-dimension. Therefore,\begin{align*}\x{GK.dim}(H)=\x{GK.dim}(\fx{gr}H)=\x{GK.dim}(A)+d<\infty,\end{align*}as desired.\end{proof}

\begin{exa}Let's consider the connected Hopf algebra $H$ generated by $kx_1+kx_2+kx_3+kz$ with the relations\begin{align*}[x_1,x_2]&=x_2\qquad[x_1,x_3]=[x_2,x_3]=0\\
[z,x_1]&=z\qquad[z,x_2]=0\qquad[z,x_3]=x_2,\end{align*}and the Hopf structure is given by\begin{align*}\varepsilon(kx_1+kx_2+kx_3+kz)&=0,\ \text{where $\varepsilon$ is the counit,}\\
\Delta(x_i)&=x_i\otimes1+1\otimes x_i,\ \text{for $i=1,2,3$,}\\
\Delta(z)&=z\otimes1+1\otimes z+x_1\otimes x_2-x_2\otimes x_1.\end{align*}(This algebra $H$ is \cite[Theorem 3.5(a)]{wzz5} with $a=c=0$ and $b=1$.) Clearly every $x_i\in P(H)$ and $z\in P_2(H)$. Because of the relation $[z,x_1]=z$, it follows that $U_H$ is not a normal Hopf subalgebra of $H$, thus we cannot apply Theorem \ref{00}. But we can apply Corollary \ref{11} to compute $\x{GK.dim}(H)$.
\par Let $A$ be the Hopf subalgebra generated by $kx_1+kx_2+kz$. It follows that $A$ is algebra-isomorphic to the enveloping algebra $U(\fk{g})$, where the Lie algebra $\fk{g}=kX_1+kX_2+kZ$ has the relations $[X_1,X_2]=[Z,X_2]=0$, $[Z,X_1]=Z$. Hence $\x{GK.dim}(A)=3$. Obviously $\{x_3\}$ generates $H$ over $A$ and $\delta(x_3)\in A\otimes A$, thus satisfying condition 1. Condition 2 is trivial, and normality immediately follows from $\x{ad}_l[x_3](a)=-\x{ad}_r[x_3](a)=x_3a-ax_3\in A$ for any $a\in kx_1+kx_2+kz$. Therefore, Corollary \ref{11} implies that $\x{GK.dim}(H)=\x{GK.dim}(A)+1=4$.\end{exa}

\par We focus our attention to proving Theorem \ref{00}. We first decompose the linear map $\delta:H\ra H\otimes H$ to two distinct parts.

\begin{defn}Given a connected Hopf algebra $H$, we define\begin{align*}\delta_{cc}=\tfrac{1}{2}(\delta+\tau\circ\delta),\ \x{and}\ \delta_{ac}=\tfrac{1}{2}(\delta-\tau\circ\delta)\end{align*}Note that $\delta=\delta_{cc}+\delta_{ac}$.\end{defn}

\begin{lmm}\label{12}Suppose $H$ is a connected Hopf algebra with $P=P_2(H)$ and $U=U_H$. Then\begin{enumerate}
\item $\delta_{cc}([s,t])=[\delta(s),\delta(t)]$ in $H\otimes H$, for any $s,t\in P_2(H)$.
\item $\delta_{ac}|_U=0$ while $\delta_{cc}|_U=\delta|_U$.
\item $\delta_{ac}|_P=\delta|_P$ while $\delta_{cc}|_P=0$.
\end{enumerate}\end{lmm}
\begin{proof}In $H\otimes H$, notice that for any non-primitive $s,t\in P$, we have\begin{align*}\delta([s,t])=[(s\otimes1+1\otimes s),\delta(t)]+[\delta(s),(t\otimes1+1\otimes t)]+[\delta(s),\delta(t)].\end{align*}Applying the twist map yields\begin{align*}\tau\circ\delta([s,t])=-[(s\otimes1+1\otimes s),\delta(t)]-[\delta(s),(t\otimes1+1\otimes t)]+[\delta(s),\delta(t)].\end{align*}Therefore, $(\delta+\tau\circ\delta)([s,t])=2[\delta(s),\delta(t)]$, and $(\delta-\tau\circ\delta)([s,t])=2[(s\otimes1+1\otimes s),\delta(t)]+2[\delta(s),(t\otimes1+1\otimes t)]$.
\par The rest is straightforward.\end{proof}

\par In short, $\delta_{cc}$ preserves the cocommutative part of $\delta$, while $\delta_{ac}$  preserves the anti-cocommutative part.
\par Given a connected Hopf algebra $H$, since $P_2(H)$ is the largest subcoalgebra of $H$ consisting of anti-cocommutative elements (\cite[Lemma 2.5]{wzz5}), then one would expect that the preimage of $\delta_{ac}$  belongs to $P_2(A)$, and similarly the preimage of $\delta_{cc}$ belongs to $U(P(H))$.

\begin{lmm}\label{14}Suppose $H\in\cl{A}(\fk{g})$. Let $s,t\in P_2(H)$ be non-primitive, and let $U_n$ denote the coradical filtration of $U_H$, i.e. $U_n=U_H\cap H_n$ for all $n\geq0$. Then $\delta_{cc}([s,t])\in\delta(U_3)$ if and only if $\delta_{ac}([s,t])\in\delta(P_2(H))$.\end{lmm}
\begin{proof}Assume $\delta_{cc}([s,t])=\delta(w)$ for some $w\in U_3$. By Lemma \ref{12}, we have\begin{align*}\Delta([s,t]-w)=([s,t]-w)\otimes1+1\otimes([s,t]-w)+\delta_{ac}([s,t]),\end{align*}thus $\delta([s,t]-w)=\delta_{ac}([s,t])$. Since $\tau\circ\delta_{ac}=-\delta_{ac}$, then $[s,t]-w\in P_2(H)$, whence $\delta([s,t]-w)=\delta_{ac}([s,t])\in\delta(P_2(H))$.
\par Now let $\delta_{ac}([s,t])=\delta(v)$ for some $v\in P_2(A)$. By Lemma \ref{12}, we have\begin{align*}\Delta([s,t]-v)=([s,t]-v)\otimes1+1\otimes([s,t]-v)+\delta_{cc}([s,t]),\end{align*}which implies that $[s,t]-v$ is cocommutative. Since $U_H$ is the largest cocommutative subcoalgebra of $H$, then $[s,t]-v\in U_H$. It follows that $st\in H_4$, whence $[s,t]-v\in H_4\cap U_H=U_4$. Since $\delta_{cc}(v)=0$, we have $\delta_{cc}([s,t])=\delta([s,t]-v)\in\delta(U_4)$. To show that $\delta_{cc}([s,t])\in\delta(U_3)$, consider for any $x_1,x_2,y_1,y_2\in P(H)$,\begin{align*}&[(x_1\otimes y_1-y_1\otimes x_1),(x_2\otimes y_2-y_2\otimes x_2)]\\
&=x_1x_2\otimes y_1y_2-x_1y_2\otimes y_1x_2-y_1x_2\otimes x_1y_2+y_1y_2\otimes x_1x_2-x_2x_1\otimes y_2y_1\\
&\qquad+y_2y_1\otimes x_2x_1+x_2x_1\otimes y_2y_1-y_2y_1\otimes x_2x_1\\
&=x_2x_1\otimes[y_1,y_2]+[y_1,y_2]\otimes x_2x_1+y_2y_1\otimes[x_1,x_2]+[x_1,x_2]\otimes y_2y_1+[x_1,x_2]\otimes[y_1,y_2]\\
&\;+[y_1,y_2]\otimes[x_1,x_2]-(x_1x_2\otimes[y_1,x_2]+[y_1,x_2]\otimes x_1y_2+y_1x_2\otimes[x_1,y_2]+[x_1,y_2]\otimes y_1x_2)\\
&\;\;\; +[x_1,y_2]\otimes[y_1,x_2]+[y_1,x_2]\otimes[x_1,y_2]\\
&\in(U_2/U_1)\otimes U_1+U_1\otimes(U_2/U_1).\end{align*}Because $\delta(s),\delta(t)\in P(H)\otimes P(H)$, then $\delta_{cc}([s,t])\in(U_2/U_1)\otimes U_1+U_1\otimes(U_2/U_1)$. If $[s,t]-v\notin U_3$, then $\delta_{cc}([s,t])=v_1+v_2+u$, where $v_1\in(U_3/U_2)\otimes U_1$, $v_2\in U_1\otimes(U_3/U_2)$ are both nonzero, and $u\in U_2\otimes U_2$. But this is absurd, therefore $[s,t]-v\in U_3$, hence $\delta_{cc}([s,t])\in\delta(U_3)$.\end{proof}

\par We need an equivalent condition of normality for the Hopf subalgebra generated by primitive elements, namely one that is analogous to the statement: given a Lie algebra $\fk{g}$, a subspace $\fk{j}$ of $\fk{g}$ is an ideal if and only if $U(\fk{j})$ is a normal Hopf subalgebra of $U(\fk{g})$ (see \cite{mmo}).

\begin{lmm}\label{15}Suppose $H\in\cl{A}(\fk{g})$. Then $U_H$ is a normal Hopf subalgebra of $H$ if and only if $[t,\fk{g}]\subset\fk{g}$ for all $t\in P_2(H)$.\end{lmm}
\begin{proof}Assume $t\in P_2(H)$ is non-primitive, as $t\in\fk{g}$ is trivial. Since $t$ is a linear combination of the basis of $P_2(H)$, then without loss of generality, assume that $t\in P_2(H)$ is a basis element with $\delta(t)=x\otimes y-y\otimes x$. It follows that $S(t)=-t+[x,y]$, so for any $g\in\fk{g}$,\begin{align*}\x{ad}_r[t](g)&=S(t)g+gt-xgy+ygx\\
&=-tg+xyg-yxg+gt+ygx-xgy\\
&=-[t,g]+y[g,x]+x[y,g],\\
\x{ad}_l[t](g)&=[t,g]+[g,x]y+[y,g]x.\end{align*}If we assume that $U_H$ is a normal Hopf subalgebra, then $[t,g]\in U_H$, and since $[t,g]\in P_2(H)$, we have that $[t,g]\in U_H\cap P_2(H)=P_2(U_H)=\fk{g}$.
\par Conversely, given $[t,\fk{g}]\subset\fk{g}$, we have $\x{ad}_r[t](\fk{g})\subset U_H$, for all $t\in P_2(H)$. Since $\x{ad}_r[ba]=\x{ad}_r[a]\circ\x{ad}_r[b]$ for all $a,b\in H$, then it follows that $\x{ad}_r[H](U_H)\subset U_H$, and similarly $\x{ad}_l[H](U_H)\subset U_H$, therefore $U_H$ is a normal Hopf subalgebra of $H$.\end{proof}

\par We can now prove Theorem \ref{00}.

\begin{proof}[Proof of Theorem \ref{00}.] Since $P_2(H)$ is finite dimensional by \cite[Lemma 2.5]{wzz5}, write $P_2(H)=(\sum_{i=1}^nkt_i)\oplus P(H)$, where each $t_i$ is non-primitive. By definition, $H$ is generated by $\{t_1,...,t_n\}$ over $U_H$. Because $\delta(P_2(H))\subset P(H)\otimes P(H)$, showing condition 2 of Corollary \ref{11} proves our result.
\par Without loss of generality, assume that $\delta(t_1)=x_1\otimes x_2-x_2\otimes x_1$, and $\delta(t_2)=y_1\otimes y_2-y_2\otimes y_1$, where $x_1,x_2,y_1,y_2\in P(H)$. Then in $H\otimes H$ we have\begin{align*}\delta_{ac}([t_1,t_2])&=\delta([t_1,t_2])-[\delta(t_1),\delta(t_2)]\\
&=([t_1,y_1]\otimes y_2-y_2\otimes[t_1,y_1])+(y_1\otimes[t_1,y_2]-[t_1,y_2]\otimes y_1)\\
&\qquad+([x_1,t_2]\otimes x_2-x_2\otimes[x_1,t_2])+(x_1\otimes[x_2,t_2]-[x_2,t_2]\otimes x_1).
\end{align*}Lemma \ref{15} implies that $[t_i,P(H)]\subset P(H)$ for all $i\leq n$, whence $\delta_{ac}([t_1,t_2])\in\delta(P_2(H))$. Furthermore, by Lemma \ref{14}, $\delta_{cc}([t_1,t_2])\in\delta(U_H)$. Therefore,\begin{align*}\delta([t_1,t_2])&=\delta_{ac}([t_1,t_2])+\delta_{cc}([t_1,t_2])\\
&\in\delta(P_2(H))+\delta(U_H)\\
&\qquad\subset\sum_{i=1}^n\delta(t_iU_H)+\delta(U_H).\end{align*}Since $t_1,t_2\in\{t_1,...,t_n\}$ are arbitrary, then condition 2 holds, and hence $H$ is an almost centralizing extension of $U_H$. Therefore,\begin{align*}\x{GK.dim}(H)=\dim P(H)+n=\dim P_2(H),\end{align*}by Corollary \ref{11}.\end{proof}

\begin{exa}There are many examples that satisfy Theorem \ref{00}. For example, the connected Hopfa algebras generated by the anti-cocommutative coassociative Lie algebra $L$ in \cite[Theorem 3.5(b), (c), (d), (g)]{wzz5}, the connected Hopf algebra $A$ in \cite[Example 4.1]{wzz5}, and the construction of the connected Hopf algebra $L$ in \cite[Section 7.2.3]{gil}.\end{exa}

\par Unfortunately not all $H\in\cl{A}(\fk{g})$ satisfies Theorem \ref{00}, especially when $\fk{g}$ is a semisimple Lie algebra.

\begin{thm}\label{18}If $H$ is a locally finite connected Hopf algebra such that $P(H)$ is a semsimple Lie algebra, and if $U_H$ is a normal Hopf subalgebra, then we have $H=U_H$.\end{thm}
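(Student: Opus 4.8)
\emph{Proof proposal.} The plan is to argue by contradiction: from $H\neq U_H$ I will extract a non-primitive element whose coproduct ``defect'' is forced to be a $\fk{g}$-invariant alternating tensor, and then use that a semisimple $\fk{g}$ admits no nonzero such tensor (the degree-$2$ case) and has vanishing low-degree cohomology (the higher-degree case). Throughout write $\fk{g}=P(H)$ and $U=U_H$. Since $P(U)=\fk{g}=P(H)$ we have $H_0=U_0$ and $H_1=U_1$; so if $H\neq U$ there is a least $n\geq2$ with $H_n\neq U_n$, and I fix $t\in H_n\setminus U_n$. By the property of the coradical filtration recalled in the introduction, $\delta(t)\in H_{n-1}\otimes H_{n-1}=U_{n-1}\otimes U_{n-1}\subseteq U\otimes U$. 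Because $U=U(\fk{g})$ is cocommutative, $\delta(U)$ consists of $\tau$-symmetric tensors, and the symmetric part $\delta_{cc}(t)$ of $\delta(t)$ is a coboundary lying in $\delta(U_n)$: the connected Hopf algebra it presents over $U$ is cocommutative, hence equals $U$ by Milnor--Moore, so that part is $\delta$ of an element of $U_n$. Subtracting that element from $t$, I may assume $\tau\circ\delta(t)=-\delta(t)$.

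Now normality enters exactly as in the proofs of Lemma \ref{10} and Lemma \ref{15}: from $S(t)+t\in U$ and $\delta(t)\in U\otimes U$ one computes $\x{ad}_r[t](g)=[g,t]+v$ with $v\in U$ for every $g\in\fk{g}$, so normality of $U$ forces $[g,t]\in U\cap H_n=U_n$. On the other hand $\delta([g,t])$ is the adjoint action $g\cdot\delta(t)$, which is alternating because $\delta(t)$ is, while it also lies in $\delta(U_n)$, a space of symmetric tensors; hence $\delta([g,t])=0$, that is $[g,t]\in P(H)=\fk{g}$, and $\delta(t)$ is a $\fk{g}$-invariant alternating tensor. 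If $n=2$ this already closes the argument: $\delta(t)\in(\Lambda^2\fk{g})^{\fk{g}}$, and for semisimple $\fk{g}$ the space $(\fk{g}\otimes\fk{g})^{\fk{g}}$ is spanned by the Casimir elements of the simple summands, which are \emph{symmetric}; so $(\Lambda^2\fk{g})^{\fk{g}}=0$, $\delta(t)=0$, and $t\in\fk{g}\subseteq U_2$, contradicting $t\notin U_2$. Thus necessarily $n\geq3$.

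For $n\geq3$ I would reduce to a central element and use rigidity of $U(\fk{g})$. Since $\x{Der}(\fk{g})=\x{ad}(\fk{g})$ for semisimple $\fk{g}$, the derivation $g\mapsto[t,g]$ of $\fk{g}$ is inner, so after subtracting a suitable element of $\fk{g}$ from $t$ I may assume $[\fk{g},t]=0$; then $t$ centralizes $U$, and replacing $H$ by $\langle U,t\rangle$ I may assume $t\in Z(H)$. Now $\delta(t)\in U(\fk{g})\otimes U(\fk{g})$ is an alternating $\fk{g}$-invariant coalgebra $2$-cocycle presenting a central extension of $U(\fk{g})$ by a polynomial Hopf algebra; the obstruction to splitting such an extension lies in a cohomology group of $\fk{g}$ (essentially $H^2(\fk{g},k)$), which vanishes by Whitehead's second lemma, so $\delta(t)$ is a coboundary, hence symmetric, hence --- being also alternating --- zero, whence $t\in\fk{g}\subseteq U_n$, again a contradiction, and $H=U_H$.

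I expect the genuine difficulty to be this last step: making precise that the alternating $\fk{g}$-invariant cocycle $\delta(t)$ really represents a cohomology class that semisimplicity kills --- equivalently, ruling out any ``anti-cocommutative'' defect in coradical degrees $\geq3$ --- together with the bookkeeping needed to pass to the central case. The degree-$2$ case, where everything collapses to the single clean identity $(\Lambda^2\fk{g})^{\fk{g}}=0$, is the transparent prototype of what the higher-degree argument must reproduce.
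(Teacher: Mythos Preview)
Your degree-$2$ argument is attractive and, modulo one easily repaired point, correct; but the proof as a whole has a genuine gap, and it is exactly the one you flagged yourself.

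First the small repair. Your sentence ``the connected Hopf algebra it presents over $U$ is cocommutative, hence equals $U$ by Milnor--Moore'' is not a proof that $\delta_{cc}(t)\in\delta(U_n)$: there is no element of $H$ whose $\delta$ is $\delta_{cc}(t)$, and formally adjoining one does not produce a Hopf algebra to which Milnor--Moore applies. For $n=2$ this is harmless, since $\delta(U_2)=S^2\fk{g}$ directly (because $\delta(xy)=x\otimes y+y\otimes x$), so the reduction to $\tau\circ\delta(t)=-\delta(t)$ goes through and the rest of your $n=2$ paragraph --- normality $\Rightarrow[g,t]\in U_2$, hence $\delta([g,t])$ simultaneously symmetric and alternating, hence zero, hence $\delta(t)\in(\Lambda^2\fk{g})^{\fk{g}}=0$ --- is clean and complete.

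The real gap is the $n\geq3$ case. Your reduction to alternating $\delta(t)$ already needs the unproved claim $\delta_{cc}(t)\in\delta(U_n)$, and the final appeal to Whitehead's second lemma is only heuristic: the alternating $\fk{g}$-invariant cocycle $\delta(t)\in U_{n-1}\otimes U_{n-1}$ is not a Chevalley--Eilenberg $2$-cochain with values in $k$, and you have not identified the cohomology theory in which it becomes a coboundary. The good news is that this case is unnecessary. Once you have shown $H_2=U_2$, you get $P_2(H)\subset U_2$; but every element of $P_2(H)$ is anti-cocommutative while every element of $U$ is cocommutative, so $P_2(H)=P(H)$, and then \cite[Lemma~2.6]{wzz5} (which the paper uses) gives $H=U_H$ immediately. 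So your $n=2$ argument, together with this one line, already finishes the proof.

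Comparison with the paper: the paper never leaves degree $2$ either. It invokes \cite[Lemma~2.6]{wzz5} at the outset to produce a non-primitive $t\in P_2(H)$, uses Lemma~\ref{15} to get $[\fk{g},t]\subset\fk{g}$, builds the primitively-thick Hopf subalgebra on $\fk{g}\oplus kt$, and then cites \cite[Proposition~6.4.5]{gil} for the contradiction. Your $(\Lambda^2\fk{g})^{\fk{g}}=0$ computation is in effect a short, self-contained proof of that cited proposition, which is a genuine gain; what you were missing is that \cite[Lemma~2.6]{wzz5} lets you stop there.
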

\begin{proof}If $H\neq U_H$, then by \cite[Lemma 2.6]{wzz5}, $P_2(H)\neq P(H)$. Since $U_H$ is a normal Hopf subalgebra of $H$, for any non-primitive $t\in P_2(H)$, $[P(H),t]\subset P(H)$ by Lemma \ref{12}, hence if $A$ is the subalgebra of $H$ generated by the subspace $P(H)\oplus kt$, then $A$ is a Hopf sublagebra since $\delta(t)\in P(H)\otimes P(H)$. Due to the relation, if we view the subspace $\fk{h}=P(H)\oplus kt$ as a Lie algebra, it follows that $A$ is isomorphic to the enveloping algebra $U(\fk{h})$ as algebras, hence $\x{GK.dim}(A)=\dim P(H)+1$, hence $A$ is primitively thick. But this contradicts \cite[Proposition 6.4.5]{gil}, therefore we must have $H=U_H$.\end{proof}

\par Transitioning into our next section on global dimension, we first pose the following question.

\begin{que}If $H$ is a locally finite connected Hopf algebra, is the GK-dimension of $H$ finite if and only if the global dimension of $H$ is finite? And does $\x{GK.dim}(H)=\x{gl.dim}(H)$ when either is finite?\end{que}

\par The result \cite[Corollary 6.10]{zhu} showed that whenever a connected Hopf algebra $H$ has finite GK-dimension, then the global dimension $H$ is exactly the GK-dimension of $H$, which proves only one direction.

\section{Application: Global Dimension}
\par In this section we ask if the dimension of $P(H)$ is exactly the global dimension of the connected Hopf algebra $H$, is $H$ isomorphic to the enveloping algebra $U(P(H))$?

\begin{lmm}\label{20}Suppose that $H$ is any connected Hopf algebra and $A$ is a Hopf subalgebra of $H$. Then $\x{gl.dim}(A)\leq\x{gl.dim}(H)$ when $A$ is left Noetherian with finite left global dimension.\end{lmm}
\begin{proof}By \cite{tak}, $H$ is a faithfully flat left $A$-module. As $A$ is left Noetherian with finite global dimension, \cite[Theorem 7.2.6]{mcr} implies that $\x{gl.dim}(A)\leq\x{gl.dim}(H)$.\end{proof}

\begin{cor}\label{21}If $H$ is a locally finite connected Hopf algebra, then $\dim P(H)\leq\x{gl.dim}(H)$.\end{cor}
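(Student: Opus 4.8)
The plan is to deduce Corollary \ref{21} from Lemma \ref{20} by taking $A=U_H$, the Hopf subalgebra of $H$ generated by the primitive elements. Recall that $U_H\cong U(P(H))$ as Hopf algebras. Since $H$ is locally finite, $P(H)$ is a finite-dimensional Lie algebra, so the enveloping algebra $U(P(H))$ is Noetherian (indeed it is an almost centralizing extension of $k$, or one simply invokes the standard PBW fact that $U(\fk{g})$ is Noetherian for $\fk{g}$ finite-dimensional) and has global dimension exactly $\dim P(H)$ (this is the classical computation $\x{gl.dim}(U(\fk{g}))=\dim\fk{g}$). In particular $U_H$ is left Noetherian with finite left global dimension equal to $\dim P(H)$.

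The one case that needs a remark is when $\x{gl.dim}(H)=\infty$, since then the inequality $\dim P(H)\leq\x{gl.dim}(H)$ is immediate and there is nothing to prove. So I would split into two cases: if $\x{gl.dim}(H)=\infty$ the claim is trivial; if $\x{gl.dim}(H)<\infty$, apply Lemma \ref{20} with $A=U_H$ to get $\dim P(H)=\x{gl.dim}(U_H)=\x{gl.dim}(A)\leq\x{gl.dim}(H)$. Either way the inequality holds.

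I do not expect any serious obstacle here; the only point requiring a little care is the justification that $U(\fk{g})$ has global dimension $\dim\fk{g}$ and is Noetherian, which are standard facts about enveloping algebras of finite-dimensional Lie algebras (for instance from the PBW filtration, whose associated graded ring is a commutative polynomial ring in $\dim\fk{g}$ variables). A short sketch:

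\begin{proof}If $\x{gl.dim}(H)=\infty$ the inequality is trivial, so assume $\x{gl.dim}(H)<\infty$. Let $A=U_H$ be the Hopf subalgebra of $H$ generated by $P(H)$; as noted in the introduction, $A\cong U(P(H))$ as Hopf algebras. Since $H$ is locally finite, $\fk{g}:=P(H)$ is finite dimensional, so by the Poincar\'e--Birkhoff--Witt theorem $U(\fk{g})$ is left (and right) Noetherian and $\x{gl.dim}(U(\fk{g}))=\dim\fk{g}$. Thus $A$ is left Noetherian with finite left global dimension $\dim P(H)$, and Lemma \ref{20} gives
\begin{align*}
\dim P(H)=\x{gl.dim}(A)\leq\x{gl.dim}(H),
\end{align*}
as claimed.\end{proof}
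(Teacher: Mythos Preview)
Your proof is correct and follows exactly the paper's approach: apply Lemma \ref{20} with $A=U_H\cong U(P(H))$, using that a finite-dimensional Lie algebra has a Noetherian enveloping algebra of global dimension $\dim P(H)$. The case split on $\x{gl.dim}(H)=\infty$ is harmless but unnecessary, since the hypotheses of Lemma \ref{20} are on $A$ alone and the conclusion $\x{gl.dim}(A)\leq\x{gl.dim}(H)$ holds trivially when the right side is infinite.
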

\begin{proof}Apply Lemma \ref{20} with $A=U_H$.\end{proof}

\par Before proving Theorem \ref{02}, we need to see how the Lie algebra $P(H)$ effects the structure of $H$.

\begin{lmm}\label{24}Suppose $H\in\cl{A}(\fk{g})$ where $\fk{g}$ is a (completely) solvable Lie algebra. Then there exists a primitively thick Hopf subalgebra $A$ of $H$ such that $U_H\subset A$.\end{lmm}
\begin{proof}Clearly the vector space $P_2(H)/\fk{g}$ is a $\fk{g}$-module. Applying \cite[Corollary 2.4.3]{graf}, there exists $v\in P_2(H)/\fk{g}$ such that $x(v)=\lambda(x)v$ for all $x\in\fk{g}$, where $\lambda:\fk{g}\ra k$ is a linear map. Since $x(v)=[x,v]$ in $H$, we let $A$ be the subalgebra of $H$ generated by $\fk{g}\oplus kv$. It is clear that $U_H\subset A$, and $A$ is a Hopf algebra since $\Delta(v)\in\fk{g}\otimes\fk{g}$. Additonally, we may view $\fk{h}=\fk{g}\oplus kv$ as a Lie algebra since $[\fk{g},v]=kv$. It follows that $A$ is isomorphic to the enveloping algebra $U(\fk{h})$ as algebras, and thus $\x{GK.dim}(A)=\dim\fk{g}+1$, hence $A$ is primitively thick.\end{proof}

\par We can now prove Theorem \ref{02}.

\begin{proof}[Proof of Theorem \ref{02}.] We start by assuming the contrary, $H\neq U_H$. As a consequence, $P_2(H)\neq P(H)$ by \cite[Lemma 2.6]{wzz5}.
\par1. First we assume that $P(H)$ is (completely) solvable. By Lemma \ref{24}, there exists a primitively thick Hopf subalgebra $A$ of $H$ such that $A$ is algebra-isomorphic to an enveloping algebra $U(\fk{h})$, where $ \fk{h}$ is a Lie algebra of dimension $\dim P(H)+1$. This implies that $A$ is Noetherian and $\x{gl.dim}(A)=\dim P(H)+1$, and thus Lemma \ref{20} forces\begin{align*}\dim P(H)<\x{gl.dim}(A)\leq\x{gl.dim}(H),\end{align*}which is a contradiction to our assumption. Therefore $H=U_H$.
\par2. If we assume that $U_H$ is normal, then consider any non-primitive $t\in P_2(H)$. By Lemma \ref{15}, $[t,P(H)]\subset P(H)$. Let $A$ be the Hopf subalgebra of $H$ generated by $P(H)\oplus kt$. It follows that $A$ is primitively thick containing $U_H$, thus it must be algebra-isomorphic to an enveloping algebra $U(L)$, where $L$ is a Lie algebra of dimension $\dim P(H)+1$. Hence $A$ is a Noetherian Hopf subalgebra with $\x{gl.dim}(A)=\dim P(H)+1$. Applying Lemma \ref{20} yields\begin{align*}\dim P(H)<\x{gl.dim}(A)\leq\x{gl.dim}(H),\end{align*}a contradiction. Therefore $H=U_H$.\end{proof}

\par Theorem \ref{02} also provides us a strict lower bound of Corollary \ref{21}, namely $\dim P(H)<\x{gl.dim}(H)$, whenever condition 1 or 2 hold. Relating to GK-dimension, a consequence of Theorem \ref{02} shows that $\x{gl.dim}(H)>\dim P(H)$ if and only if $\x{GK.dim}(H)>\dim P(H)$.
\par Futhermore, we see that Theorem \ref{02} is a specific setting for a more general question.

\begin{que}If $A\subset H$ are (affine) locally finite connected Hopf algebras with $\x{gl.dim}(H)=\x{gl.dim}(A)$, does $H=A$?\end{que}

\section*{Acknowledgments}
The author would like to thank his adviser Allen Bell, and the Mathematics Department at both University of Wisconsin-Milwaukee and Bradley University for their guidance and support of the author's work. The author is also grateful to the referees for their constructive feedback and advice.

\section*{References}
\vspace{.2in}

\par\scshape{Department of Mathematics, Bradley University, Peoria, Illinois 61606}
\par\textit{E-mail addresss}: \texttt{dyee@fsmail.bradley.edu}
\end{document}